\newcommand{\lowfwd}[3]{{\mathop{\kern0pt #1}\limits^{\kern#2pt\raise.#3ex
			\vbox to 0pt{\hbox{$\scriptscriptstyle\rightarrow$}\vss}}}}
\newcommand{\lowbkwd}[3]{{\mathop{\kern0pt #1}\limits^{\kern#2pt\raise.#3ex
			\vbox to 0pt{\hbox{$\scriptscriptstyle\leftarrow$}\vss}}}}
\newcommand{\fwd}[2]{{\lowfwd{#1}{#2}{15}}}
\newcommand{\vS}{{\hskip-1pt{\fwd S3}\hskip-1pt}}
\newcommand{\vSstar}{{\mathop{\kern0pt S\lower-1pt\hbox{$^*$}}\limits^{\kern2pt
			\vbox to 0pt{\hbox{$\scriptscriptstyle\rightarrow$}\vss}}}}
\newcommand{\vSdash}{{\mathop{\kern0pt S\lower-1pt\hbox{${}
				\scriptstyle'$}}\limits^{\kern2pt\raise.1ex
			\vbox to 0pt{\hbox{$\scriptscriptstyle\rightarrow$}\vss}}}}
\definecolor{skyblue}{HTML}{3465a4}
\crefname{enumi}{}{}
\crefname{property}{property}{properties}
\crefname{LEM}{Lemma}{the Lemmas}
\renewcommand{\PrintDOI}[1]{\doi{#1}}
\newtheorem{THM}{Theorem}
\newtheorem{LEM}{Lemma}[section]
\newtheorem{COR}[LEM]{Corollary}
\newtheorem{PROP}[LEM]{Proposition}
\theoremstyle{definition}
\newtheorem{EX}[LEM]{Example}
\newcommand{\abs}[1]{\lvert#1\rvert}
\newcommand{\menge}[1]{\left\{#1\right\}}
\newcommand{\tn}[1]{\textnormal{#1}}
\renewcommand{\phi}{\varphi}
\newcommand{\N}{\mathbb{N}}
\newcommand{\sub}{\subseteq}
\newcommand{\sm}{\smallsetminus}
\newcommand{\es}{\emptyset}
\renewcommand{\le}{\leqslant}
\renewcommand{\ge}{\geqslant}
\newcommand{\cP}{\mathcal{P}}
\newcommand{\cR}{\mathcal{R}}
\def\restricts{\!\restriction\!}
\def\proj{\restricts}
\DeclareMathOperator{\dom}{dom}
\DeclareMathOperator{\coh}{coh}
\colorlet{colorA}{orange!70!white}
\colorlet{colorB}{cyan!60!black}
\title{Ends as tangles}
\author{Jakob Kneip} %
\begin{document}
	
\maketitle

\begin{abstract}
	Every end of an infinite graph $ G $ defines a tangle of infinite order in~$ G $. These tangles indicate a highly cohesive substructure in the graph if and only if they are closed in some natural topology.
	
	We characterize, for every finite $ k $, the ends $ \omega $ whose induced tangles of order $ k $ are closed. They are precisely the tangles $ \tau $ for which there is a set of $ k $ vertices that decides $ \tau $ by majority vote. Such a set exists if and only if the vertex degree plus the number of dominating vertices of $ \omega $ is at least $ k $.
\end{abstract}

\section{Introduction}\label{sec:infinite_ends_intro}

Our first object of study in infinite tangle theory are tangles in infinite graphs. 
In~\cite{EndsAndTangles}, it was shown how the set~$ \Theta $ of tangles of infinite order of an arbitrary infinite graph can be used to compactify that graph, much in the same way as the set~$ \Omega $ of ends of a connected locally finite graph can be used to compactify it. Indeed, if a graph~$ G $ is connected and locally finite, these compactifications~$ \abs{G}_\Theta $ and~$ \abs{G}_\Omega $ of~$ G $ coincide. This is because every end~$ \omega $ of an infinite graph~$ G $ induces a tangle~$ \tau=\tau_\omega $ of order~$ \aleph_0 $ in~$ G $, and for locally finite connected~$ G $ the map~$ \omega\mapsto\tau_\omega $ is a bijection between the set~$ \Omega $ of ends of~$ G $ and the set~$ \Theta $ of its~$ \aleph_0 $-tangles. (Graphs that are not locally finite have~$ \aleph_0 $-tangles that are not induced by an end.)

In~\cite{EndsAndTangles}, a natural topology on the set $ {\vS=\vS_{\aleph_0}(G)} $ of separations of finite order of~$ G $ was defined. A tangle~$ \tau $ induced by an end of~$ G $ is a closed set in this topology if and only if~$ \tau $ is defined by an~$ \aleph_0 $-block in~$ G $, that is, if there is an~$ \aleph_0 $-block~$ K $ in~$ G $ with~$ K\sub B $ for all separations~$ (A,B) $ in~$ \tau $.

Our research expands on this latter result. Every end~$ \omega $ of a graph induces not only a tangle of infinite order in~$ G $, but for each~$ k\in\N $ the end~$ \omega $ induces a~$ k $-tangle in~$ G $. The set~$ \vS_k $ of all separations~$ (A,B) $ of~$ G $ with~$ \abs{A\cap B}<k $ is a closed set in~$ \vS $, and thus if the tangle~$ \tau $ induced by~$ \omega $ in~$ G $ is a closed set in~$ \vS $, the~$ k $-tangle~$ \tau\cap\vS_k $ induced by~$ \omega $ will be closed as well. However, it is possible that a tangle~$ \tau $ in~$ G $ of infinite order fails to be closed in~$ \vS $, while its restrictions~$ \tau\cap\vS_k $ to~$ \vS_k $ are closed for some, or even all,~$ k\in\N $. In this paper we characterize the ends of~$ G $ by the behaviour of their tangles, as follows: We show that, for an end~$ \omega $ and its induced tangle~$ \tau $, the restriction~$ \tau\cap\vS_k $ to~$ \vS_k $ is a closed set in~$ \vS $ if and only if
\[ {\deg(\omega)+\dom(\omega)\ge k}, \]
where~$ \deg(\omega) $ and~$ \dom(\omega) $ denote the vertex degree and number of vertices dominating~$ \omega $, respectively.

We further show that~$ \tau $ is closed in~$ \vS $ if and only if~$ \omega $ is dominated by infinitely many vertices.

A question raised in~\cite{Profiles} asks whether for a~$ k $-tangle~$ \tau $ in a finite graph~$ G $ one can always find a set~$ X $ of vertices which decides~$ \tau $ by majority vote, in the sense that~$ (A,B)\in\tau $ if and only if~$ \abs{A\cap X}<\abs{B\cap X} $, for all~$ (A,B)\in\vS_k $. This problem is still open in general, although some process has been made recently~(see~\cite{ChristiansMasterarbeit,WeightedDeciders}). We establish an analogue in the infinite setting: we show that for an end~$ \omega $ of~$ G $ and its induced~$ k $-tangle~$ \tau\cap\vS_k $ in~$ G $, the existence of a finite set~$ X $ which decides~$ \tau\cap\vS_k $ in the above sense is equivalent to~$ \tau\cap\vS_k $ being a closed set in~$ \vS $. In this way the global property of being topologically closed can be linked to the local phenomenon of a finite set deciding the end tangle.

This paper is organized as follows:~\cref{sec:infinite_ends_notation} contains the basic definitions and some notation. Following that, in~\cref{sec:infinite_ends_S}, we recall the core concepts and results from~\cite{EndsAndTangles} that are relevant to our studies, including the topology defined on~$ \vS $. Finally, in~\cref{sec:infinite_ends_Sk}, we prove our main results~\cref{thm:categories} and~\cref{thm:vSk}. The first of these characterises the ends of a graph by the behaviour of their tangles, and the second shows that~$ \tau\cap\vS_k $ being a closed set in~$ \vS $ for a~$ k $-tangle~$ \tau $ induced by some end~$ \omega $ of~$ G $ is equivalent to both~$ \deg(\omega)+\dom(\omega)\ge k $ and to~$ \tau\cap\vS_k $ being decided by some finite set of vertices.

\section{Separations, tangles, and their topology}\label{sec:infinite_ends_notation}

Throughout this paper~$ G=(V,E) $ will be a fixed infinite graph. Let us recall the relevant definitions for tangles in graphs, and their extensions to infinite graphs. For any graph-theoretical notation not explained here we refer the reader to~\cite{DiestelBook}.

A {\em separation} of~$ G $ is a set~$ \{A,B\} $ with~$ A\cup B=V $ such that~$ G $ contains no edge between~$ A\sm B $ and~$ B\sm A $. We call such a set~$ \{A,B\} $ an {\em unoriented} separation with the two orientations~$ (A,B) $ and~$ (B,A) $. Informally we think of the oriented separation~$ (A,B) $ as {\em pointing towards}~$ B $ and {\em pointing away} from~$ A $. The {\em separator} of a separation~$ \{A,B\} $ is the set~$ A\cap B $.

The {\em order} of a separation~$ (A,B) $ or~$ \{A,B\} $ of~$ G $ is the cardinality~$ \abs{A\cap B} $ of its separator. For a cardinal~$ \kappa $ we write~$ S_\kappa=S_\kappa(G) $ for the set of all unoriented separations of~$ G $ of order~$ <\kappa $. If~$ S $ is a set of unoriented separations we write~$ \vS $ for the corresponding set of oriented separations, that is, the set of all separations~$ (A,B) $ with $ {\{A,B\}\in S} $. Consequently we write~$ \vS_\kappa $ for the set of all separations~$ (A,B) $ of~$ G $ with~$ \abs{A\cap B}<\kappa $.

If~$ S $ is a set of unoriented separations of~$ G $, an {\em orientation} of~$ S $ is a set~$ O\subseteq\vS $ such that~$ O $ contains precisely one of~$ (A,B) $ or~$ (B,A) $ for every~$ \{A,B\}\in S $. A {\em tangle of~$ S $ in~$ G $} is an orientation~$ \tau $ of~$ S $ such that there are no~$ (A_1,B_1)$,~$ (A_2,B_2)$, and~$ (A_3,B_3) $ in~$ \tau $ for which~$ G[A_1]\cup G[A_2]\cup G[A_3]=G $.

Properties of sets of separations of finite graphs, including their tangles, often generalize to sets of separations of infinite graphs but not always. Those sets of separations to which these properties tend to generalize can be identified, however: they are the sets of separations that are closed in a certain natural topology~\cite{ProfiniteASS}. Let us define this topology next. It is analogous to the topology of a profinite abstract separation system defined in~\cite{ProfiniteASS}.\footnote{Even though~$ \vS $ itself is not usually profinite in the sense of~\cite{ProfiniteASS}, the topology we define on~$ \vS $ is the subspace topology of~$ \vS $ as a subspace of the (profinite) system of all oriented separations of~$ G $, equipped with the inverse limit topology from~\cite{ProfiniteASS}.}

From here on we denote by $ {S=S_{\aleph_0}(G)} $ the set of all (unoriented) separations of~$ G $ of finite order. Thus~$ \vS=\vS_{\aleph_0}(G) $ is the set of all separations~$ (A,B) $ of~$ G $ with~$ A\cap B $ finite.

We define our topology on~$ \vS$ by giving it the following basic open sets. Pick a finite set~$ Z\sub V$ and an oriented separation~$ (A_Z,B_Z)$ of~$ G[Z]$. Then declare as open the set~$ O(A_Z,B_Z)$ of all~$ (A,B)\in\vS$ such that~$ A\cap Z = A_Z$ and $B\cap Z = B_Z$. We shall say that these~$ (A,B)$ {\em induce\/}~$ (A_Z,B_Z)$ on~$Z$, writing~$ (A_Z,B_Z)=: (A,B)\restricts Z$, and that~$ (A,B)$ and~$ (A',B')$ {\em agree on~$Z$} if~$ (A,B)\restricts Z = (A',B')\restricts Z$.

It is easy to see that the sets~$ O(A_Z,B_Z)$ do indeed form the basis of a topology on~$ \vS$. Indeed,~$ (A,B)\in\vS$ induces~$ (A_1,B_1)$ on~$Z_1$ and~$ (A_2,B_2)$ on~$Z_2$ if and only if it induces on~$ Z = Z_1\cup Z_2$ some separation~$ (A_Z,B_Z)$ which in turn induces~$ (A_i,B_i)$ on~$Z_i$ for both~$i$. Hence~$ O(A_1,B_1)\cap O(A_2,B_2)$ is the union of all these~$ O(A_Z,B_Z)$.

As we shall see, the intuitive property of tangles in finite graphs that they describe, if indirectly, some highly cohesive region of that graph -- however `fuzzy' this may be in terms of concrete vertices and edges -- will extend precisely to those tangles of~$ S $ that are closed in~$ \vS $.

\section{End tangles of~$ S $}\label{sec:infinite_ends_S}

We think of an oriented separation~$ (A,B)\in\vS $ as pointing towards~$ B $, or being oriented towards~$ B $. In the same spirit, given an end~$ \omega $ of~$ G $, we say that~$ (A,B) $ {\em points towards}~$ \omega $, and that~$ \omega $ {\em lives in}~$ B $, if some (equivalently: every) ray of~$ \omega $ has a tail in~$ B $. Furthermore, if~$ (A,B) $ points to an end~$ \omega $, then~$ (B,A) $ {\em points away from}~$ \omega $.

Clearly, for every end of~$ G $ and every~$ \{A,B\}\in S $, precisely one orientation of~$ \{A,B\} $ points towards that end. In this way, every end~$ \omega $ of~$ G $ defines an orientation of~$ S $ by orienting each separation in~$ S $ towards~$ \omega $:
\[ \tau=\tau_\omega\coloneqq \{(A,B)\in\vS\mid\tn{every ray of }\omega\tn{ has a tail in }B\} \]
It is easy to see~(\cite{EndsAndTangles}) that this is a tangle in~$ G $. We call it the {\em end tangle induced} on~$ S $ by the end~$ \omega $.

Note that every end tangle contains all separations of the form~$ (A,V) $ for finite~$ A\sub V $, and thus no separation of the form~$ (V,B) $. Furthermore, any two ends induce different end tangles. Our aim in this section is to recall from~\cite{EndsAndTangles} some properties of the end tangles of~$ S $ that we shall later extend to its subsets~$ S_k $. For the convenience of the reader, and also in order to correct an inessential but confusing error in~\cite{EndsAndTangles}, we repeat some of the material from~\cite{EndsAndTangles} here to make our presentation self-contained.

Let us first see an example of an end tangle that is not closed in~$ \vS $.

\begin{EX}\label{ex:Ray}
	{\em If~$ G$ is a single ray~$ v_0 v_1\dots$ with end~$ \omega$, say, then~$ \tau=\tau_\omega$~is not closed in~$ \vS$.}
	
	Indeed,~$ \tau$ contains~$ (\es,V)$, and hence does not contain~$(V,\es)$. But for every finite~$ Z\sub V$ the restriction~$ (Z,\es)$ of~$ (V,\es)$ to~$Z$ is also induced by the separation $ (\{v_0,\dots,v_n\}, \{v_n, v_{n+1},\dots\})\in\tau$ for every~$ n$ large enough that~$ Z\sub\{v_0,\dots,v_{n-1}\}$. So~$ (V,\es)\in\vS\sm\tau$ has no open neighbourhood in~$ \vS\sm\tau.\hfill \square$
\end{EX}

Here is an example of an end tangle that is closed in~$ \vS $. Unlike our previous example, it describes a highly cohesive part of~$ G $.

\begin{EX}\label{ex:Complete}
	{\em If~$ K\sub V$ spans an infinite complete graph in~$ G$, then
		\[ \tau = \{\,(A,B)\in\vS\mid K\sub B\,\}\tag{1}\label{eq:K} \]
		is a closed set in~$ \vS$.}
	
	We omit the easy proof. But note that~$ \tau $ is indeed an end tangle: it is induced by the unique end of~$ G $ which contains all the rays in~$ K $. 
\end{EX}

Perhaps surprisingly, it is not hard to characterize the end tangles that are closed. They are all essentially like~\cref{ex:Complete}: we just have to generalize the infinite complete subgraph used appropriately. Of the two obvious generalizations, infinite complete minors~\cite{RSTminors} or subdivisions of infinite complete graphs~\cite{RST}, the latter turns out to be the right one.

Let~$ \kappa$ be any cardinal. A set of at least~$ \kappa $ vertices of~$ G $ is~$ (<\kappa) $-{\em inseparable} if no twoof them can be separated in~$ G $ by fewer than~$ \kappa $ vertices. A maximal~$ (<\kappa) $-inseparable set of vertices is a~$ \kappa $-{\em block}. For example, the branch vertices of a~$TK_\kappa$ are~$ (<\kappa) $-inseparable. Conversely:

\begin{LEM}\label{lem:TK}
	When~$ \kappa $ is infinite, every~$ (<\kappa) $-inseparable set of vertices in~$ G $ contains the branch vertices of some~$ TK_\kappa\sub G $.
\end{LEM}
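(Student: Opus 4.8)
The plan is to build the desired $TK_\kappa$ by transfinite recursion, introducing its branch vertices \emph{one at a time} rather than all at once, and to route every subdivided edge directly from the definition of $(<\kappa)$-inseparability, avoiding any appeal to an infinite Menger-type theorem. The single fact I shall use repeatedly is this reformulation of inseparability: if $u,v$ lie in a $(<\kappa)$-inseparable set $U$ and $F\sub V\sm\{u,v\}$ has $\abs F<\kappa$, then $F$ does not separate $u$ from $v$, so $G-F$ still contains a $u$--$v$ path. Thus I can always join two vertices of $U$ by a path whose interior dodges any prescribed set of fewer than $\kappa$ vertices.

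Let $U$ be $(<\kappa)$-inseparable, so that $\abs U\ge\kappa$. I would recurse on $\alpha<\kappa$, maintaining a set $W_\alpha$ of all vertices used so far and always keeping $\abs{W_\alpha}<\kappa$. At stage $\alpha$ I pick a new branch vertex $b_\alpha\in U\sm W_\alpha$, which is possible since $\abs{W_\alpha}<\kappa\le\abs U$, and attach $b_\alpha$ to all earlier branch vertices by a fan: in an inner recursion over the targets $b_\beta$ with $\beta<\alpha$, I choose a $b_\alpha$--$b_\beta$ path $Q_{\alpha\beta}$ whose interior avoids the forbidden set $F$ consisting of (i) the interiors of all previously chosen paths, (ii) every branch vertex other than $b_\alpha,b_\beta$, and (iii) the interiors of the fan-paths at the present stage built before $Q_{\alpha\beta}$. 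Since only $<\kappa$ branch vertices exist at stage $\alpha$, and the path interiors accumulated so far form a set of size $<\kappa$, we have $\abs F<\kappa$, so the reformulation above yields the required $Q_{\alpha\beta}$. Ranging over all $\beta<\alpha$ produces a fan from $b_\alpha$ to $\{b_\beta:\beta<\alpha\}$ that is internally disjoint from everything built before and passes through no branch vertex internally.

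Finally I would check that the paths $Q_{\alpha\beta}$, one for each pair $\beta<\alpha$ and built at stage $\max\{\alpha,\beta\}=\alpha$, are pairwise internally disjoint and internally avoid all branch vertices, so that together with $\{b_\alpha:\alpha<\kappa\}$ they constitute a $TK_\kappa\sub G$ whose branch set lies in $U$; this is immediate from the avoidance conditions, since every later path is built to shun the interiors and branch vertices of all earlier ones. The point that needs care, and the only real obstacle, is the cardinal bookkeeping that keeps $\abs{W_\alpha}<\kappa$: this is exactly what forces the incremental introduction of branch vertices, for if all $\kappa$ of them were fixed in advance then clause~(ii) alone would place $\kappa$ vertices in every forbidden set and the routing step would fail. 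One must also verify that the running total stays below $\kappa$ even when $\kappa$ is singular; here stage $\alpha$ contributes at most $\abs\alpha$ finite paths, whence $\abs{W_\alpha}\le\max(\abs\alpha,\aleph_0)<\kappa$, the estimate going through because it is a single cardinal maximum rather than a sum of $\kappa$ many terms.
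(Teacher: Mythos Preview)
Your proof is correct and is essentially the same argument as the paper's: a transfinite induction over $\alpha<\kappa$ in which at stage~$\alpha$ one picks a fresh branch vertex $v_\alpha\in K$ and routes internally disjoint paths from $v_\alpha$ to every earlier $v_\beta$, each time avoiding the $<\kappa$ vertices used so far. The paper compresses this into a single sentence, whereas you spell out the inner recursion for the fan and the cardinal bookkeeping (including the singular case), but the underlying method is identical.
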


\begin{proof}
	Let~$ K\sub V $ be~$ (<\kappa) $-inseparable. Viewing~$ \kappa $ as an ordinal we can find, inductively for all~$ \alpha<\kappa $, distinct vertices $ v_\alpha\in K $ and internally disjoint $ v_\alpha $-$ v_\beta $ paths in~$ G $ for all~$ \beta<\alpha $ that also have no inner vertices among those $ v_\beta $ or on any of the paths chosen earlier; this is because~$ \abs{K}\ge\kappa $, and no two vertices of~$ K $ can be separated in~$ G $ by the~$ <\kappa $ vertices used up to that time.
\end{proof}

The original statement of~\cref{lem:TK} in~\cite[Lemma~5.4]{EndsAndTangles} asserted that for infinite~$ \kappa $ a set~$ K\sub V $ is a~$ \kappa $-block in~$ G $ if and only if it is the set of branch vertices of some~$ TK_\kappa\sub G $. It turns out that both directions of that assertion were incorrect: the set of branch vertices of a~$ TK_\kappa\sub G $ is certainly~$ (<\kappa) $-inseparable, but might not be maximal with this property and hence not a~$ \kappa $-block. Conversely, if~$ K $ is a~$ \kappa $-block, there might not be a~$ TK_\kappa\sub G $ whose set of branch vertices is precisely~$ K $: if~$ \abs{K}>\kappa $ this is certainly not possible, but even if~$ \abs{K}=\kappa $ one might not be able to find a~$ TK_\kappa $ in~$ G $ whose branch vertices are all of~$ K $. If for instance the graph~$ G $ is a clique on~$ \kappa $ vertices that is missing exactly one edge, then~$ K=V(G) $ is a~$ \kappa $-block in~$ G $ but not the set of branch vertices of a~$ TK_\kappa\sub G $.

For the main theorem of this section we need one more observation:

\begin{LEM}\label{lem:profile}
	If~$ \tau $ is an end tangle of~$ G $ that contains~$ (A,B) $ and~$ (C,D) $ then~$ \tau $ also contains~$ {(A\cup C\,,\,B\cap D)} $. 
\end{LEM}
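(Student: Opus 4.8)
The plan is to exploit that $\tau$ is the end tangle $\tau_\omega$ of some end $\omega$, so that membership of a separation $(X,Y)\in\vS$ in $\tau$ is governed entirely by whether every ray of $\omega$ has a tail in $Y$. The statement then reduces to two tasks: verifying that $\{A\cup C,\,B\cap D\}$ is a genuine separation of $G$ of finite order, so that it actually lies in $S$, and checking that $\omega$ lives in $B\cap D$.

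First I would check that $\{A\cup C,\,B\cap D\}$ is a separation. The equality $(A\cup C)\cup(B\cap D)=V$ is immediate from $A\cup B=V$ (or $C\cup D=V$). For the edge condition I would assume an edge $xy$ with $x\in(A\cup C)\sm(B\cap D)$ and $y\in(B\cap D)\sm(A\cup C)$ and derive a contradiction. Here $y\in B\sm A$ and $y\in D\sm C$, so the fact that $\{A,B\}$ is a separation forces $x\in A\Rightarrow x\in B$, and likewise $\{C,D\}$ forces $x\in C\Rightarrow x\in D$. A short case distinction on whether $x$ lies in $A\sm C$, in $C\sm A$, or in $A\cap C$ then contradicts either $x\notin B\cap D$ or $A\cup B=C\cup D=V$ in each case. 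I expect this case analysis to be the fiddliest part of the whole argument; everything else is essentially immediate.

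For the order I would use the containment $(A\cup C)\cap(B\cap D)\sub(A\cap B)\cup(C\cap D)$: any vertex on the left lies in $B\cap D$ and in $A$ or in $C$, hence in $A\cap B$ or in $C\cap D$. Since $\{A,B\}$ and $\{C,D\}$ lie in $S$, their separators are finite, so the separator of $\{A\cup C,\,B\cap D\}$ is finite too and the separation lies in $\vS$.

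Finally, for the end itself, I would show that $\omega$ lives in $B\cap D$. As $(A,B),(C,D)\in\tau=\tau_\omega$, every ray of $\omega$ has a tail contained in $B$ and a tail contained in $D$; since the tails of a single ray are nested, the smaller of these two tails is contained in $B\cap D$. Hence every ray of $\omega$ has a tail in $B\cap D$, which is exactly the condition for $(A\cup C\,,\,B\cap D)\in\tau_\omega=\tau$ by the definition of the end tangle.
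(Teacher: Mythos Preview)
Your proposal is correct and follows essentially the same approach as the paper's own proof: verify that $(A\cup C,\,B\cap D)$ is a separation of finite order and then use the tail argument to see that $\omega$ lives in $B\cap D$. The paper merely asserts the first step in one sentence, whereas you spell out the edge-condition and separator-inclusion details; one small slip is that the equality $(A\cup C)\cup(B\cap D)=V$ needs both $A\cup B=V$ \emph{and} $C\cup D=V$, not just one of them, but this is available anyway.
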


\begin{proof}
	Observe first that~$ (A\cup C\,,\,B\cap D) $ is a separation of~$ G $ with finite order and thus lies in~$ \vS $. Moreover, if a ray of~$ G $ has a tail in~$ B $ and a tail in~$ D $, then that ray also has a tail in~$ B\cap D $. From this it follows that~$ (A\cup C\,,\,B\cap D)\in\tau $, as claimed. 
\end{proof}

We can now re-prove and slightly extend the characterization from~\cite{EndsAndTangles} of the tangles that are closed in~$ \vS $. Let us say that a set~$ {K\sub V}$ is an {\em absolute decider} for a tangle~$ \tau $ of~$ S $ if~$ \tau$ satisfies~\eqref{eq:K}.

\begin{THM}[\cite{EndsAndTangles}]\label{thm:blocks}
	For a tangle~$ \tau $ of~$ S $ the following are equivalent:
	\begin{itemize}
		\item[\tn{(i)}]~$ \tau $ is closed in~$ \vS $;
		\item[\tn{(ii)}]~$ \tau $ is absolutely decided by some set~$ K\sub V $;
		\item[\tn{(iii)}]~$ \tau $ is absolutely decided by an~$ {\aleph_0} $-block~$ K $.
	\end{itemize}
\end{THM}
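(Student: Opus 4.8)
The plan is to prove the cycle of implications (iii)$\Rightarrow$(ii)$\Rightarrow$(i)$\Rightarrow$(iii). The first implication is immediate, since an $\aleph_0$-block is in particular a set $K\sub V$. For (ii)$\Rightarrow$(i), suppose $\tau=\{(A,B)\in\vS\mid K\sub B\}$ and take any $(A,B)\notin\tau$. Then there is a vertex $v\in K\sm B$, and since $A\cup B=V$ we have $v\in A\sm B$. The basic open set given by $Z=\{v\}$ and $(A,B)\restricts Z=(\{v\},\es)$ contains $(A,B)$ and avoids $\tau$, because every $(A',B')$ in it has $v\notin B'$ and hence $K\not\sub B'$. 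So $\vS\sm\tau$ is open, exactly as in \cref{ex:Complete}.

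For the hard direction (i)$\Rightarrow$(iii) I would set $K:=\bigcap\{B\mid(A,B)\in\tau\}$ and aim to show that this set is an $\aleph_0$-block absolutely deciding $\tau$. I first record that $\tau$ is a profile: exactly as in \cref{lem:profile} (whose proof uses only the tangle axiom and so applies to every tangle of $S$), whenever $(A,B),(C,D)\in\tau$ then also $(A\cup C,B\cap D)\in\tau$. In particular $\tau$ is up-directed, so every finite subset of $\tau$ has a supremum in $\tau$, and $\tau$ contains the bottom separation $(\es,V)$ while excluding every $(V,\cdot)$ by the tangle axiom. Two consequences then need no topology at all. First, $K$ is $(<\aleph_0)$-inseparable: if two vertices $u,v\in K$ could be separated by a finite set, the associated separation $\{A,B\}$ would be oriented by $\tau$, and whichever orientation $\tau$ chose would force one of $u,v$ off its $B$-side, contradicting $u,v\in K$. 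Second, if $K$ is \emph{infinite} then it absolutely decides $\tau$, since $(A,B)\notin\tau$ with $K\sub B$ would give $(B,A)\in\tau$, hence $K\sub A$, hence $K\sub A\cap B$, which is impossible for an infinite $K$ and a finite separator.

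The crux, which I expect to be the main obstacle, is to show that closedness forces $K$ to be infinite; everything else above is free. I would argue the contrapositive: assuming $K$ finite, I exhibit a point of $\vS\sm\tau$ no basic neighbourhood of which avoids $\tau$. The candidate is $(V,K)$, which lies in $\vS$ (as $K$ is finite) but not in $\tau$ (three copies would cover $G$). Given an arbitrary finite $Z$, I must find a separation of $\tau$ inducing $(V,K)\restricts Z=(Z,K\cap Z)$, that is, one with $Z\sub A'$ and $B'\cap Z=K\cap Z$. To build it, for each $z\in Z\sm K$ I pick $(A_z,B_z)\in\tau$ with $z\notin B_z$ (possible since $z\notin K$), take a supremum $(A^+,B^+)\in\tau$ of these finitely many separations (the empty supremum being $(\es,V)$), so that $Z\sm K\sub A^+\sm B^+$, and then set $(A',B'):=(A^+\cup K,B^+)$. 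This is a separation of finite order inducing $(Z,K\cap Z)$ on $Z$, and it lies in $\tau$: were its reverse $(B^+,A^+\cup K)$ in $\tau$ instead, its supremum with $(A^+,B^+)$ would be $(V,(A^+\cap B^+)\cup K)$, a separation whose first side is all of $G$ and which therefore cannot lie in $\tau$, contradicting the profile property. Hence every neighbourhood of $(V,K)$ meets $\tau$, so $\tau$ is not closed.

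Finally, once $K$ is known to be infinite and $(<\aleph_0)$-inseparable, it is automatically \emph{maximal} with this property, hence an $\aleph_0$-block: for any $v\notin K$ choose $(A,B)\in\tau$ with $v\in A\sm B$; since $K\sub B$ is infinite while $K\cap A\sub A\cap B$ is finite, some $u\in K$ lies in $B\sm A$, so $\{A,B\}$ separates $u$ from $v$ by the finite set $A\cap B$, showing $K\cup\{v\}$ is not $(<\aleph_0)$-inseparable. Combined with the second consequence above, this makes $K$ an $\aleph_0$-block that absolutely decides $\tau$, which is (iii); by \cref{lem:TK} this block even contains the branch vertices of a $TK_{\aleph_0}\sub G$, matching the picture of \cref{ex:Complete}.
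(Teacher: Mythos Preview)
Your proof is correct and takes essentially the same route as the paper: the same cycle of implications, the same candidate $K=\bigcap\{B\mid(A,B)\in\tau\}$, the same limit point $(V,K)$ when $K$ is finite, and the same witnessing separation (your $(A^+\cup K,B^+)$ is exactly the paper's $(K\cup\bigcup_z A_z,\,\bigcap_z B_z)$, just reached via a contrapositive argument rather than by folding $(K,V)$ into the supremum directly). One small correction: the paper's proof of \cref{lem:profile} does \emph{not} use only the tangle axiom as you say, but the ray structure of end tangles; the profile property nevertheless holds for every tangle of $S$, since $G[A]\cup G[C]\cup G[B\cap D]=G$ whenever $\{A,B\},\{C,D\}\in S$, so $(B\cap D,A\cup C)\in\tau$ would violate the tangle axiom directly---this is the argument you actually need (and, to be fair, the paper's own proof of \cref{thm:blocks} also tacitly relies on it).
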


\begin{proof}
	We will show~$ \tn{(iii)}\Rightarrow\tn{(ii)}\Rightarrow\tn{(i)}\Rightarrow\tn{(iii)} $. The first of these implications is clear.
	
	To see that~$ \tn{(ii)}\Rightarrow\tn{(i)} $ suppose that~$ \tau $ is a tangle of~$ S $ that is absolutely decided by some set~$ K\sub V$. Observe first that~$ K $ must be infinite. To show that~$ \tau$ is closed, we have to find for every~$ (A,B)\in\vS\sm\tau$ a finite set~$ Z\sub V$ such that no~$ (A',B')\in\vS$ that agrees with~$ (A,B)$ on~$Z$ lies in~$ \tau$. Since~$ (A,B)\notin\tau$ and hence~$ (B,A)\in\tau $ we have~$ K\sub A$; pick~$ z\in K\sm B$. Then every~$ (A',B')\in\vS$ that agrees with~$ (A,B)$ on~$ Z\coloneqq \{z\}$ also also lies in~$ \vS\sm\tau$, since~$ z\in A'\sm B'$ and this implies~$ K\not\sub B'$. 
	
	To see that~$ \tn{(i)}\Rightarrow\tn{(iii)} $ let
	$$K\coloneqq  \bigcap\{\,B\mid (A,B)\in\tau\,\}.$$
	No two vertices in~$K$ can be separated by in~$ G$ by a finite-order separation: one orientation~$ (A,B)$ of this separation would be in~$ \tau$, which would contradict the definition of~$K$ since~$ A\sm B$ also meets~$K$. If~$ K$ is infinite, it will clearly be maximal with this property, and hence be an~$ {\aleph_0}$-block. This~$ {\aleph_0}$-block~$ K$ will be an absolute decider for~$ \tau$: by definition of~$ K$ we have~$ K\sub B$ for ever~$ (A,B)\in\tau$, while also every~$ (A,B)\in\vS$ with~$ K\sub B$ must be in~$ \tau$: otherwise~$ (B,A)\in\tau$ and hence~$ K\sub A$ by definition of~$K$, but~$ K\not\sub A\cap B$ because this is finite. Hence~$ \tau$ will be decided absolutely by an~$ {\aleph_0}$-block, as desired for this implication.%
	\footnote{Whether or not~$ \tau$ is closed in~$ \vS$ is immaterial; we just did not use this assumption.}
	
	It thus suffices to show that if~$ K$ is finite then~$ \tau$ is not closed in~$ \vS$, which we shall do next.
	
	Assume that~$ K$ is finite. We have to find some~$ (A,B)\in\vS\sm\tau$ that is a limit point of~$ \tau$, i.e., which agrees on every finite~$ Z\sub V$ with some~$ (A',B')\in\tau$. We choose~$ (A,B)\coloneqq  (V,K)$, which lies in~$ \vS\sm\tau $ since~$ (K,V)\in\tau $.
	
	To complete our proof as outlined, let any finite set~$ Z\sub V$ be given. For every~$ z\in Z\sm K$ choose~$ (A_z,B_z)\in\tau$ with~$ z\in A_z\sm B_z$: this exists, because~$ {z\notin K}$. Since~$ (K,V)\in\tau $, by~\cref{lem:profile} we have~$ (A',B')\in\tau$ for
	\[ A'\coloneqq  K\cup\bigcup_{z\in Z\sm K} A_z\quad\tn{and}\quad B'\coloneqq  V\cap\bigcap_{z\in Z\sm K} B_z\,. \]
	As desired,~$ (A',B')\restricts Z = (A,B)\restricts Z$ (which is~$ (Z,Z\cap K)$, since~$ (A,B) = (V,K)$): every~$ z\in Z\sm K$ lies in some~$ A_z$ and outside that~$ B_z$, so~$ z\in A'\sm B'$, while every~$ z\in Z\cap K$ lies in~$ K\sub A'$ and also, by definition of~$K$, in every $B_z$ (and hence in~$ B'$), since~$ (A_z,B_z)\in\tau$.
\end{proof}

This proof of~\cref{thm:blocks} concludes our exposition of material from~\cite{EndsAndTangles}.

\section{End tangles of~$ S_k $}\label{sec:infinite_ends_Sk}

In~\cref{thm:blocks} we characterised those tangles of~$ S $ that are closed in~$ \vS=\vS_{\aleph_0} $. However even if an end tangle of~$ S $ is not closed in~$ \vS $ its restrictions to~$ \vS_k\sub\vS $ may still be closed in~$ \vS $ for some or even all~$ k\in\N $. The set of values of~$ k $ for which this is the case may hold some combinatorial information about that tangle and the end that is inducing it. In the remainder of this paper we will study the connection between the combinatorial properties of an end and the set of values of~$ k $ for which its end tangle's restriction to~$ \vS_k $ is closed. In particular we will show how these values of~$ k $ relate to the vertex degree and number of domination vertices of an end. Let us set up some terminology for this.

Observe first that the set~$ \vS_k $ is a closed subset of~$ \vS $ for each~$ k\in\N $. Given an end~$ \omega $ of~$ G $ with end tangle~$ \tau=\tau_\omega $, clearly~$ \tau\cap\vS_k $ is a tangle of~$ S_k $; we call it the {\em tangle of~$ S_k $ induced by~$ \omega $}. Where we say that this tangle of~$ S_k $ is~\emph{closed}, we mean that it is closed in~$ \vS_k $ or, equivalently, in~$ \vS $.

For~$ \ell<k $ the set~$ \vS_\ell $ is closed in~$ \vS_k $, and hence any end inducing a closed tangle of~$ S_k $ also induces a closed tangle of~$ S_\ell $. This motivates the following definition: for an end~$ \omega $ of~$ G $ let the~\emph{cohesion} of~$ \omega $ be
\[ \coh(\omega)\coloneqq \sup\menge{\kappa\le\aleph_1\mid\tau_\omega\cap\vS_k\tn{ is closed in }\vS\tn{ for all }k<\kappa}\,. \]
The ends of~$ G $ then fall into three distinct categories: we say that an end~$ \omega $ has
\begin{itemize}
	\item[(i)] \emph{infinite cohesion} if~$ \coh(\omega)=\aleph_1 $, i.e. if~$ \tau_\omega $ is closed;
	\item[(ii)] \emph{unbounded cohesion} if~$ \coh(\omega)=\aleph_0 $, i.e. if~$ \tau_\omega $ is not closed but~$ \tau_\omega\cap\vS_k $ is for all~$ k\in\N $;
	\item[(iii)] \emph{bounded cohesion}~$ \coh(\omega)\in\N $ otherwise.
\end{itemize}

If an end does not have infinite cohesion we say that it has~\emph{finite cohesion}.

Phrased in this language~\cref{thm:blocks} characterised the ends of infinite cohesion: they are those whose end tangle is decided absolutely by an~$ \aleph_0 $-block. In fact, as we shall show later, the latter is equivalent to that end being dominated by infinitely many vertices. Our goal in the remainder of this section is to obtain similar characterisations for ends of finite cohesion, both in terms of vertex sets deciding the respective tangle and in terms of combinatorial parameters of that end.

Let us first see examples of ends belonging to the third and second category, respectively:

\begin{EX}\label{ex:Ray2}
	Let~$ G $ be as in~\cref{ex:Ray}, that is, a single ray~$ v_0 v_1\dots$ with end~$ \omega$. The same argument as in~\cref{ex:Ray} shows that~$ \tau\cap\vS_k $ is not closed in~$ \vS_k $ for~$ k\ge 2 $. However,~$ \omega $ does induce a closed tangle of~$ S_1 $: the set~$ \tau\cap\vS_1=\{(\emptyset,V)\} $ is closed in~$ \vS_1 $.
\end{EX}

\begin{EX}\label{ex:grid}
	Let~$ G $ be the infinite grid,~$ \omega $ the unique end of~$ G $ and~$ \tau $ the tangle induced by~$ \omega $ in~$ S=S(G) $. Since~$ G $ is locally finite it does not contain an $ {\aleph_0} $-block. Therefore~$ \tau $ cannot be decided absolutely by an $ {\aleph_0} $-block and is thus not closed in~$ \vS $ by~\cref{thm:blocks}. However for every~$ k\in\N $ it is easy to see that~$ \tau\cap\vS_k $ is closed: indeed for fixed~$ k\in\N $ the size of~$ A $ is bounded in terms of~$ k $ for every~$ (A,B)\in\tau\cap\vS_k $. Thus for every~$ (A',B')\in\vS_k\sm\tau $ any~$ Z\sub V(G) $ with~$ \abs{Z\cap A'} $ sufficiently large witnesses that~$ (A',B') $ does not lie in the closure of~$ \tau $: if~$ Z $ is large enough that~$ \abs{Z\cap A'}>\abs{A} $ for every~$ (A,B)\in\tau\cap\vS_k $ then no~$ (A,B)\in\tau\cap\vS_k $ will agree with~$ (A',B') $ on~$ Z $.
\end{EX}

\cref{ex:grid} shows that the end tangle~$ \tau $ of the infinite grid is not decided absolutely by a set~$ X\sub V $ in the sense of~\eqref{eq:K}. In fact this is true even for~$ \tau\cap\vS_k $ for~$ k\ge 5 $: there can be no set~$ X\sub V $ with~$ X\sub B $ for all~$ (A,B)\in\tau\cap\vS_k $ since for each~$ x\in X $ the separation~$ (\{x\}\cup N(x)\,,\,V\sm\{x\}) $ lies in~$ \tau\cap\vS_k $. Thus even the tangles of~$ S_k $ that~$ \tau $ induces do not have absolute deciders. However they come reasonably close to it: for~$ X=V $ and any~$ (A,B)\in\tau $ the \emph{relative} majority of~$ X $ lies in~$ B $, that is, we have~$ \abs{A\cap X}<\abs{B\cap X} $. In fact for any fixed~$ k\in\N $ every finite set~$ X\sub V $ that is at least twice as large as~$ \max\{\abs{A}\mid(A,B)\in\tau\cap\vS_k\} $ has the property that~$ \abs{A\cap X}<\abs{B\cap X} $ for each~$ (A,B)\in\tau\cap\vS_k $. Therefore even though no~$ \tau\cap\vS_k $ has an absolute decider we can for each~$ k\in\N $ find a (finite) `relative decider' of~$ \tau\cap\vS_k $.

Let us make the above observation formal. For an end tangle~$ \tau $ of~$ G $ let us call a set~$ X\sub V $ a {\em relative decider} for~$ \tau~$ (resp., for~$ \tau\cap\vS_k $) if we have~$ \abs{A\cap X}<\abs{B\cap X} $ for every~$ (A,B)\in\tau~$ (resp.,~$ (A,B)\in\tau\cap\vS_k $). Thus if we have a relative decider for an end tangle~$ \tau $ then given a separation~$ (A,B)\in\vS $ the decider set tells us which of~$ (A,B) $ and~$ (B,A) $ lies in~$ \tau $ by a simple majority vote. Clearly each absolute decider of a tangle is also a relative decider.

If~$ \omega $ is an end of infinite cohesion then by~\cref{thm:blocks} its end tangle~$ \tau=\tau_\omega $ has an (infinite) absolute decider. In analogy with this we shall show that if an end tangle~$ \tau $ is closed in~$ \vS_k $ then~$ \tau $ has a finite relative decider. Such a finite relative decider can be thought of as a local encoding of the tangle or a local witness to the tangle being closed in~$ \vS_k $.

In contrast it is easy to see that every end tangle has an infinite relative decider:

\begin{PROP}\label{prop:infinitedeciderset}
	For any end~$ \omega $ of~$ G $ the end tangle~$ \tau $ induced by~$ \omega $ has an infinite relative decider.
\end{PROP}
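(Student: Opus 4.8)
The plan is to exhibit a concrete infinite relative decider, namely the vertex set of a ray in~$ \omega $. First I would fix any ray~$ R=r_0r_1\dots $ belonging to~$ \omega $ and set~$ X\coloneqq V(R) $; since the vertices of a ray are distinct, $ X $ is infinite. It then remains only to verify that~$ \abs{A\cap X}<\abs{B\cap X} $ for every~$ (A,B)\in\tau $.

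So let~$ (A,B)\in\tau $ be given. By the definition of the end tangle~$ \tau=\tau_\omega $, the ray~$ R $ has a tail in~$ B $, so all but finitely many vertices of~$ R $ lie in~$ B $; hence~$ B\cap X $ is infinite. For the other side I would split~$ A=(A\sm B)\cup(A\cap B) $ and observe that~$ X\cap(A\sm B) $ is finite, because these are vertices of~$ R $ lying outside~$ B $ and only finitely many vertices of~$ R $ lie outside the tail, while~$ X\cap(A\cap B)\sub A\cap B $ is finite because~$ (A,B)\in\vS $ has finite separator. Thus~$ A\cap X $ is finite, and comparing cardinalities gives~$ \abs{A\cap X}<\abs{B\cap X} $, as every finite cardinal is strictly smaller than~$ \aleph_0 $. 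This shows that~$ X $ is a relative decider for~$ \tau $, completing the proof.

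There is no real obstacle here, in line with the paper's remark that this is easy to see: the essential point is merely that every separation in~$ \tau $ has a finite separator and points towards~$ \omega $, which forces all but finitely many vertices of any fixed ray of~$ \omega $ onto the~$ B $-side. The only thing to be careful about is to account for the (finitely many) ray vertices that may lie in the separator~$ A\cap B $, which is handled by the finiteness of~$ \abs{A\cap B} $.
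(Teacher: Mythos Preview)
Your proof is correct and follows exactly the same approach as the paper: the paper's proof is the single sentence ``For any ray~$R\in\omega$ its vertex set~$V(R)$ is a relative decider for~$\tau$,'' and your argument simply supplies the (straightforward) verification of that claim.
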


\begin{proof}
	For any ray~$ R\in\omega $ its vertex set~$ V(R) $ is a relative decider for~$ \tau $.
\end{proof}

Since no end tangle of~$ S $ can have a finite decider (relative or absolute) studying the existence of finite deciders for end tangles is thus only interesting for the tangles' restrictions to some~$ \vS_k $.

We shall complement this local witness of a given end tangle being closed with a more global type of witness: the vertex degree and number of vertices dominating that end. The latter two are well-studied parameters of ends; let us recall their definitions.

The {\em (vertex) degree}~$ \deg(\omega) $ of an end~$ \omega $ of~$ G $ is the largest size of a family of pairwise disjoint~$ \omega $-rays\footnote{Here our notation deviates from that in~\cite{DiestelBook}, where $ d(\omega) $ is used for the degree of~$ \omega $.}. A vertex $ v\in V $ {\em dominates} an end~$ \omega $ if it sends infinitely many disjoint paths to some (equivalently: to each) ray in~$ \omega $. We write~$ \dom(\omega) $ for the number of vertices of~$ G $ which dominate~$ \omega $. An end~$ \omega $ is {\em undominated} if~$ \dom(\omega)=0 $; it is {\em finitely dominated} if finitely many (including zero) vertices of~$ G $ dominate~$ \omega $; and, finally,~$ \omega $ is {\em infinitely dominated} if~$ \dom(\omega)=\infty $.

We are now ready to formally state the connection between the cohesion~$ \coh(\omega) $ of an end~$ \omega $ of~$ G $ and these parameters~$ \deg(\omega) $ and~$ \dom(\omega) $:

\begin{restatable}{THM}{ThmCategories}\label{thm:categories}
	Let~$ \omega $ be an end of~$ G $. Then the following statements hold:
	\begin{itemize}
		\item[\tn{(i)}]~$ \omega $ has infinite cohesion if and only if~$ \dom(\omega)=\infty $.
		\item[\tn{(ii)}]~$ \omega $ has unbounded cohesion if and only if~$ \deg(\omega)=\infty $ and~$ \dom(\omega)<\infty $.
		\item[\tn{(iii)}]~$ \omega $ has bounded cohesion~$ \coh(\omega)=k $ if and only if $ \deg(\omega)+\dom(\omega)=k-1 $.
	\end{itemize}
\end{restatable}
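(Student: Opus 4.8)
The three statements partition the ends of $G$ according to $\deg(\omega)$ and $\dom(\omega)$, so the plan is to reduce everything to a single quantitative statement and then read off (i)--(iii) as special cases. Concretely, I would first prove the key lemma that $\tau_\omega\cap\vS_k$ is closed in $\vS$ if and only if $\deg(\omega)+\dom(\omega)\ge k$; this is exactly the characterisation advertised in the introduction. Granting that lemma, all three parts follow by unwinding the definition of $\coh(\omega)$: statement (iii) says $k-1$ is the largest value $<\coh(\omega)$, which happens precisely when $\tau_\omega\cap\vS_{k-1}$ is closed but $\tau_\omega\cap\vS_k$ is not, i.e. $\deg(\omega)+\dom(\omega)\ge k-1$ while $\deg(\omega)+\dom(\omega)<k$; statement (ii) is the case where the sum is infinite but $\dom(\omega)$ alone is finite; and statement (i), by \cref{thm:blocks}, reduces to showing $\dom(\omega)=\infty$ is equivalent to $\tau_\omega$ being absolutely decided by an $\aleph_0$-block.

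\textbf{The two directions of the key inequality.}
The heart is therefore the quantitative lemma, and I would prove its two directions separately. For the ``$\ge$ implies closed'' direction, suppose $\deg(\omega)+\dom(\omega)\ge k$. I would extract a witnessing structure: take $d=\deg(\omega)$ pairwise disjoint $\omega$-rays together with the $\dom(\omega)$ vertices dominating $\omega$, and use these to build, for each $(A,B)\in\vS_k$ not in $\tau$, a finite set $Z$ of at least $k$ vertices drawn from this structure that forces $A\cap Z$ to exceed the separator; since $\abs{A\cap B}<k$, any $(A',B')$ agreeing with $(A,B)$ on $Z$ must then put too many of these $k$ disjoint-ray/dominating witnesses into $A'$ to be orientable towards $\omega$, so no such separation lies in $\tau$. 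This mirrors the closure argument in \cref{thm:blocks} (ii)$\Rightarrow$(i) and the grid computation in \cref{ex:grid}, with the witnessing vertices playing the role there played by $K$. For the converse, ``$<k$ implies not closed'', I would assume $\deg(\omega)+\dom(\omega)=d<k$ and construct a limit point of $\tau_\omega\cap\vS_k$ lying outside it. Here the degree-domination duality is essential: a theorem in the spirit of the defect version of Menger's theorem for ends tells us that when the degree plus domination is $d$, there is a finite vertex set of size $d$ whose removal separates $\omega$ from arbitrarily large finite portions of the graph, and I would feed this into the $(V,K)$-style construction from \cref{thm:blocks}, using \cref{lem:profile} to amalgamate separations in $\tau$ so that they agree with the chosen non-tangle separation on any prescribed finite $Z$.

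\textbf{The infinite-cohesion case and the main obstacle.}
For part (i) I would close the loop between \cref{thm:blocks} and domination. If $\dom(\omega)=\infty$, the infinitely many dominating vertices are pairwise $(<\aleph_0)$-inseparable together with any ray of $\omega$, so they lie in a common $\aleph_0$-block that absolutely decides $\tau_\omega$, giving closure by \cref{thm:blocks}. Conversely, if $\tau_\omega$ is closed then it has an infinite absolute decider $\aleph_0$-block $K$, and I would argue that every vertex of $K$ must dominate $\omega$ (otherwise it could be finitely separated from the end, contradicting $K\subseteq B$ for the relevant $(A,B)\in\tau_\omega$), forcing $\dom(\omega)=\infty$. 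The main obstacle I anticipate is the ``$<k$ implies not closed'' direction: turning the parameter bound $\deg(\omega)+\dom(\omega)=d$ into a single finite separator of size $d$ that simultaneously controls all of $\omega$'s rays and dominating vertices is the genuinely infinitary step, since $\deg$ counts disjoint rays while $\dom$ counts dominating vertices and one must combine these two different kinds of witnesses into one clean Menger-type separation. Making that separator do double duty — and verifying that the resulting $(V,K)$-type separation is both outside $\tau$ and a limit of members of $\tau$ — is where the real work lies; the remaining bookkeeping to deduce (i)--(iii) from the key lemma is routine.
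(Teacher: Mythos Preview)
Your proposal is correct and follows essentially the paper's route: reduce \cref{thm:categories} to the quantitative lemma ``$\tau_\omega\cap\vS_k$ is closed iff $\deg(\omega)+\dom(\omega)\ge k$'' together with \cref{thm:blocks}, and then read off (i)--(iii). Your treatment of (i) and of the ``$<k$ implies not closed'' direction (via a Menger-type separator of size $\deg(\omega)+\dom(\omega)$ between a given finite $Z$ and $\omega$, then a $(V,D)$-style limit point) matches the paper's argument almost verbatim; the paper packages the Menger step as \cref{lem:sequence} and \cref{cor:deg+dom}, which is exactly the ``genuinely infinitary step'' you flag as the main obstacle.

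The one place your sketch diverges is the ``$\ge k$ implies closed'' direction. You propose, for each $(A,B)\notin\tau$, to pick an ad-hoc witness $Z$ from the $k$ rays/dominating vertices; this does work (the $k$ disjoint rays must each cross $A'\cap B'$ after the chosen vertex, and dominating vertices are forced into $B'$, overloading a separator of order $<k$). The paper instead proves more: it constructs via \cref{lem:paths} a \emph{single} set $X$ of size $k$ that relatively decides all of $\tau\cap\vS_k$ at once, which requires an additional path system linking the ray-starts so that $X$ behaves like a $(<k)$-inseparable set against every separation simultaneously. Your per-separation argument is simpler and suffices for \cref{thm:categories}; the paper's uniform decider is needed because it is also establishing the stronger \cref{thm:vSk}, where the existence of a size-$k$ relative decider is itself one of the equivalent conditions.
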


\cref{thm:categories} will be a consequence of~\cref{thm:blocks} and the following theorem, which characterises for which~$ k\in\N $ a given end tangle's restrictions are closed and makes the connection to relative decider sets:

\begin{restatable}{THM}{EndsChar}\label{thm:vSk}
	Let~$ \tau $ be the end tangle induced by an end~$ \omega $ of~$ G $ and let~$ k\in\N $. Then the following are equivalent:
	\begin{itemize}
		\item[\tn{(i)}]~$ \tau\cap\vS_k $ is closed;
		\item[\tn{(ii)}]~$ \deg(\omega)+\dom(\omega)\ge k $;
		\item[\tn{(iii)}]~$ \tau\cap\vS_k $ has a finite relative decider;
		\item[\tn{(iv)}]~$ \tau\cap\vS_k $ has a relative decider of size exactly~$ k $.
	\end{itemize}
\end{restatable}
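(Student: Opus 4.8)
The plan is to establish $\tn{(i)}\Leftrightarrow\tn{(ii)}$ directly and then close the circle with $\tn{(ii)}\Rightarrow\tn{(iv)}\Rightarrow\tn{(iii)}\Rightarrow\tn{(i)}$, of which only $\tn{(ii)}\Rightarrow\tn{(iv)}$ is substantial. The main tool throughout is the following reformulation of ``relative decider''. For a finite set $S\sub V$ write $C_S$ for the (unique) component of $G-S$ in which $\omega$ lives. Every separation in $\tau\cap\vS_k$ pointing away from $\omega$ and having separator $S$ is dominated by the extreme one $(V\sm C_S,\,C_S\cup S)$, so a set $X\sub V$ is a relative decider for $\tau\cap\vS_k$ if and only if for every finite $S$ with $\abs S<k$ strictly more vertices of $X$ lie in $C_S$ than in $V\sm(C_S\cup S)$. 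I will use two facts repeatedly: a vertex dominating $\omega$ lies in $C_S\cup S$ for every finite $S$ (otherwise its infinitely many disjoint paths to $\omega$ would all have to cross the finite set $S$), and every $\omega$-ray has a tail in $C_S$.

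For $\tn{(ii)}\Rightarrow\tn{(i)}$ I would take $(A,B)\in\vS_k\sm\tau$, so that $(B,A)\in\tau$ and $\omega$ lives in $A$. Choose $d_0\le\deg(\omega)$ and $m_0\le\dom(\omega)$ with $d_0+m_0=k$, together with $d_0$ disjoint $\omega$-rays $R_1,\dots,R_{d_0}$ avoiding $m_0$ chosen dominating vertices $u_1,\dots,u_{m_0}$. Each $R_i$ has a tail in $A\sm B$; pick a vertex $w_i$ there and set $Z\coloneqq(A\cap B)\cup\{w_1,\dots,w_{d_0}\}\cup\{u_1,\dots,u_{m_0}\}$. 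If some $(A',B')\in\tau\cap\vS_k$ agreed with $(A,B)$ on $Z$, then each $u_j$ (which lies in $A$) would be forced into $A'\cap B'$ -- were it in $A\sm B$ it would land in $A'\sm B'$, contradicting that the dominating vertex $u_j$ lies in $B'$ -- while each $w_i\in A'\sm B'$ would force $R_i$ to meet $A'\cap B'$ in a vertex distinct from the others and from the $u_j$. Hence $\abs{A'\cap B'}\ge d_0+m_0=k$, contradicting $(A',B')\in\vS_k$; so the basic open neighbourhood of $(A,B)$ given by $Z$ misses $\tau\cap\vS_k$, and $\tau\cap\vS_k$ is closed. For $\tn{(i)}\Rightarrow\tn{(ii)}$ I would argue the contrapositive: if $d\coloneqq\deg(\omega)$ and $m\coloneqq\dom(\omega)$ are finite with $d+m<k$, then by the structure theory of ends (see~\cite{DiestelBook}) there are separations $(A_n,B_n)\in\tau$ of order $d+m<k$ with $A_n\uparrow V$ and $\bigcap_n B_n=D$, where $D$ is the finite set of vertices dominating $\omega$. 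These lie in $\tau\cap\vS_k$, while $(V,D)\in\vS_k\sm\tau$ (as $D$ is finite, $\omega$ cannot live in $D$) is a limit point of them; hence $\tau\cap\vS_k$ is not closed.

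The implication $\tn{(iv)}\Rightarrow\tn{(iii)}$ is trivial, and $\tn{(iii)}\Rightarrow\tn{(i)}$ is the same neighbourhood argument as in~\cref{thm:blocks}: if $X$ is a finite relative decider and $(A,B)\in\vS_k\sm\tau$, then $(B,A)\in\tau\cap\vS_k$ gives $\abs{A\cap X}>\abs{B\cap X}$, so no element of $\tau\cap\vS_k$ can agree with $(A,B)$ on the finite set $X$. This leaves $\tn{(ii)}\Rightarrow\tn{(iv)}$, the heart of the theorem. By the reformulation above I must exhibit $k$ vertices $X$ such that every separator $S$ of order $<k$ leaves strictly more of $X$ in $C_S$ than outside $C_S\cup S$. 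The intended $X$ combines $m_0$ dominating vertices with $d_0$ vertices chosen on $d_0$ disjoint $\omega$-rays (again $d_0+m_0=k$), taken deep in the end and mutually far apart. The dominating vertices never leave $C_S\cup S$; so, writing $o$ for the number of chosen vertices lying outside $C_S\cup S$ and $s=\abs{X\cap S}$, it suffices to prove $2o+s\le\abs S<k$, which yields the required strict majority $o<\abs{X\cap C_S}$. Each of the $o$ cut-off ray vertices forces $S$ to meet its own ray and, being deep in a common end, to block a second independent arc from it to $\omega$; if these $2o$ separator vertices can be kept distinct from one another and from $X\cap S$, the bound follows.

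The hard part will be exactly this disjointness. A single end may have very low local connectivity -- in the grid a vertex is cut off from $\omega$ by just four others -- so the two arcs out of each chosen vertex, and their disjointness across different chosen vertices, cannot be read off from $\deg(\omega)$ alone: they must be engineered by spreading the $k$ vertices far apart, routing them through distinct rays, and exploiting the infinite connectivity of the dominating vertices (which, as the ``ladder plus apex'' example shows, can itself supply the missing second arc). Proving that $\deg(\omega)+\dom(\omega)\ge k$ always permits such a choice -- equivalently, that cutting $o$ of the $k$ vertices off from $\omega$ by fewer than $k$ vertices always costs at least $2o+s$ -- is the main obstacle. It is an end-linkage lemma in the spirit of~\cref{lem:TK}, and it is the infinite-end counterpart of the (open) finite majority-decider problem, so it is precisely here that the full structure of the end, combining degree and domination, must be brought to bear.
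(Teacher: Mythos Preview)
Your implications $\tn{(i)}\Rightarrow\tn{(ii)}$, $\tn{(iv)}\Rightarrow\tn{(iii)}$ and $\tn{(iii)}\Rightarrow\tn{(i)}$ match the paper's, and your direct argument for $\tn{(ii)}\Rightarrow\tn{(i)}$ is correct and pleasant (the paper does not prove this implication separately but recovers it only via $\tn{(ii)}\Rightarrow\tn{(iv)}\Rightarrow\tn{(iii)}\Rightarrow\tn{(i)}$). Your $\tn{(i)}\Rightarrow\tn{(ii)}$ is also the paper's argument, though what you cite as ``structure theory of ends'' is precisely the content of \cref{lem:sequence} and \cref{cor:deg+dom}, which the paper proves rather than quotes.

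The genuine gap is $\tn{(ii)}\Rightarrow\tn{(iv)}$: you correctly flag it as the obstacle but do not prove it, and without it condition~(iv) is not linked to the others. Moreover, the paper's route is not the one you sketch. You aim for the inequality $2o+s\le\abs{S}$ by finding, for each cut-off ray vertex, a \emph{second independent arc to~$\omega$}. The paper instead proves a linkage lemma (\cref{lem:paths}): one can choose $X$ (dominating vertices together with start-vertices of disjoint $\omega$-rays $\cR$) so that for \emph{every} pair of subsets $A,B\sub X$ there are $\min(\abs{A},\abs{B})$ disjoint $A$--$B$ paths whose internal vertices avoid both $X$ and all rays in~$\cR$. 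This is achieved by first fixing one connecting path for each of the $\binom{k}{2}$ pairs from $D\cup\cR$ (disjoint except possibly at~$D$), then taking tails of the rays beyond all these paths; a short Menger-plus-counting argument (if $\abs T<n$ and $x+y\ge n+1$ then $xy\ge n$) verifies the linkage. The decider property is then obtained by contradiction: if $\abs{X_{A\sm B}}\ge\abs{X_{B\sm A}}$ for some $(A,B)\in\tau\cap\vS_k$, the separator $A\cap B$ must contain $X_{A\cap B}$, plus one vertex per ray from $X_{A\sm B}$, plus one vertex per linkage path from $X_{A\sm B}$ to $X_{B\sm A}$, all distinct---giving $\abs{A\cap B}\ge\abs{X}=k$. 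Thus the ``second arc'' out of a cut-off vertex is not another route to~$\omega$ but a path across to the opposite side $X_{B\sm A}$ of~$X$; routing these paths through the pre-chosen $\binom{k}{2}$ connectors and the original (pre-tail) ray segments is exactly what makes the required disjointness tractable.
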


Let us first derive~\cref{thm:categories} from~\cref{thm:blocks} and~\cref{thm:vSk}:

\begin{proof}[Proof of~\cref{thm:categories}]
	Let~$ \omega $ be an end of~$ G $ and~$ \tau=\tau_\omega $ its end tangle. We will show (i) using~\cref{thm:blocks} and derive (iii) from~\cref{thm:vSk}. Then (ii) is an immediate consequence of the other two.
	
	To see that (i) holds let us first suppose that~$ \omega $ has infinite cohesion, i.e. that~$ \tau $ is closed in~$ \vS $. Then by~\cref{thm:blocks}~$ \tau $ is decided absolutely by an $ {\aleph_0} $-block~$ K\sub V $. It is easy to see that each vertex of~$ K $ dominates~$ \omega $. Since~$ K $ is infinite we thus indeed have~$ \dom(\omega)=\infty $.
	
	For the converse suppose that~$ \omega $ is infinitely dominated and let us show that it has infinite cohesion. For each separation~$ (A,B)\in\tau $ each vertex dominating~$ \omega $ must be contained in~$ B $. Therefore~$ K\coloneqq \bigcap\{B\mid(A,B)\in\tau\} $ is infinite and thus, as seen in the proof of~\cref{thm:blocks}, an absolute decider for~$ \tau $, which is hence closed in~$ \vS $. 
	
	Claim (iii) is a direct consequence of the definition of~$ \coh(\omega) $ and the equivalence of the first two statements of~\cref{thm:vSk}.
\end{proof}

We will conclude this section by proving~\cref{thm:vSk}. For this we shall need two preparatory lemmas. The first lemma can be seen as an analogue of Menger's Theorem between a vertex set and an end. Given a set~$ X\sub V $ and an end~$ \omega $ we say that~$ F\sub V $ {\em separates~$ X $ from~$ \omega $} if every~$ \omega $-ray which meets~$ X $ also meets~$ F $. 

\begin{LEM}\label{lem:sequence}
	Let~$ \omega $ be an undominated end of~$ G $ and~$ X\sub V $ a finite set. The largest size of a family of disjoint~$ \omega $-rays which start in~$ X $ is equal to the smallest size of a set~$ T\sub V $ separating~$ X $ from~$ \omega $.
\end{LEM}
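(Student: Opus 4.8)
The plan is to prove the two inequalities separately, the forward one being routine and the reverse one carrying all the weight. Throughout, write $C(S,\omega)$ for the component of $G-S$ in which $\omega$ lives, and let $m$ denote the minimum size of a set separating $X$ from $\omega$.

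For the easy inequality I would argue that any family $\menge{R_1,\dots,R_\ell}$ of disjoint $\omega$-rays starting in $X$ bounds every separator from below: each $R_i$ meets $X$ at its initial vertex, so by definition any $T$ separating $X$ from $\omega$ meets each $R_i$, and as the rays are disjoint this forces $\abs{T}\ge\ell$. Taking the supremum over such families and the infimum over separators gives $\tn{(max rays)}\le\tn{(min separator)}$. Note also that the minimum is finite: since every $\omega$-ray meeting $X$ trivially meets $X$, the set $T\coloneqq X$ is a separator, so $m\le\abs X$. Hence it suffices to exhibit $m$ disjoint $\omega$-rays starting in $X$.

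For this reverse inequality I would first use undominatedness to build a descending sequence of finite frontiers. Since no vertex dominates $\omega$, every vertex can be separated from $\omega$ by a finite set, and taking finite unions shows that every finite subset of $V$ can be so separated. I would then choose finite sets $S_1,S_2,\dots$ whose $\omega$-components $C_n\coloneqq C(S_n,\omega)$ are strictly nested with $S_{n+1}\sub C_n$, with $C_1\cap X=\es$, and with $\bigcap_n C_n=\es$; this last property is exactly what undominatedness buys us. The crucial observation is that every set separating $X$ from $S_n$ is already a set separating $X$ from $\omega$: an $\omega$-ray meeting $X$ must cross $S_n$ to reach $\omega\in C_n$, and its initial segment up to $S_n$ is an $X$-$S_n$ path. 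Consequently every $X$-$S_n$ separator has at least $m$ vertices, while $S_n$ itself is such a separator, so the minimal one is finite; the finite-cut case of Menger's theorem (valid whenever the minimum separator is finite) then yields a family $\mathcal P_n$ of $m$ disjoint $X$-$S_n$ paths, which I truncate at their first $S_n$-vertex so that they avoid $C_n$. Because the frontiers are nested, each such path in fact crosses every earlier frontier $S_j$ with $j\le n$.

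The final and hardest step is to pass from the path systems $\mathcal P_n$ to $m$ disjoint $\omega$-rays. The natural idea is a compactness argument: record for each $n$ the finite linkage pattern describing which vertex of each $S_j$ ($j\le n$) is first hit by each of the $m$ paths, note that truncating $\mathcal P_n$ at $S_{n-1}$ realises the pattern one level down, and apply König's Infinity Lemma to the resulting finitely branching tree of patterns to read off $m$ limit rays, which meet every frontier and hence, since the $C_n$ shrink to $\omega$, are disjoint $\omega$-rays starting in $X$. The main obstacle is that $G$ need not be locally finite, so the segments of the paths between consecutive frontiers range over a possibly infinite set and a naive König argument on whole segments fails; undominatedness is used a second time here, to guarantee that the paths cannot accumulate at a single vertex and can genuinely be pushed out to $\omega$. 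I expect the cleanest way around this to be either an augmenting-path reformulation—take a maximal disjoint family of $\omega$-rays from $X$ and show that a separator smaller than this family would allow one more ray, contradicting maximality—or a direct appeal to the finite-cut infinite Menger theorem between $X$ and a fixed ray $R\in\omega$, converting the resulting disjoint $X$-$V(R)$ paths into disjoint rays by relinking along $R$.
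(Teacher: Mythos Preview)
Your setup is sound and close to the paper's: the easy inequality, the use of undominatedness to produce nested finite frontiers with $\bigcap_n C_n=\es$, and the observation that any $X$--$S_n$ separator already separates $X$ from $\omega$ are all correct. The gap is exactly where you flag it: the passage from the path systems $\cP_n$ to $m$ disjoint $\omega$-rays. Your König argument on linkage patterns does fail in non-locally-finite $G$, and neither of your suggested fallbacks is carried out; the augmenting-path sketch in particular would need a nontrivial argument to push a single extra ray all the way to $\omega$, and the ``Menger to a fixed ray $R$'' idea does not obviously produce $m$ \emph{disjoint} $\omega$-rays after relinking along $R$.

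The paper sidesteps compactness entirely by choosing the frontiers more carefully. It sets $T_0\coloneqq T$ (a minimum $X$--$\omega$ separator) and then takes each $T_n$ to be a \emph{minimum-size} set separating $T_{n-1}$ from $\omega$ with $T_n\cap T_{n-1}=\es$. Minimality buys two things: the sizes $\abs{T_n}$ are non-decreasing, and Menger's theorem between \emph{consecutive} frontiers $T_n,T_{n+1}$ (and between $X$ and $T_0$) yields $\abs{T_n}\ge\abs{T}$ disjoint paths at every step. These finite linkages can then be concatenated directly into $\abs{T}$ rays starting in $X$, with no limiting process at all; the fact that $\bigcap_n C_n=\es$ (proved via a shortest-path argument) forces the resulting rays to lie in $\omega$. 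The idea you are missing, then, is to apply Menger locally between successive minimal frontiers rather than globally from $X$ to each $S_n$: this turns the construction of the rays into a straightforward inductive concatenation and removes the need for any compactness or infinite-Menger machinery.
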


\begin{proof}
	Let~$ T $ be a set separating~$ X $ from~$ \omega $ of minimal size. Clearly a family of disjoint~$ \omega $-rays which all start in~$ X $ cannot be larger than~$ T $ since each ray in that family must meet~$ T $. So let us show that we can find a family of~$ \abs{T} $ disjoint~$ \omega $-rays starting in~$ X $.
	
	Observe that since~$ \omega $ is undominated we can find for each vertex~$ v\in V $ a finite set~$ T_v\sub V\sm\{v\} $ which separates $ v $  from~$ \omega $. Thus, for every finite set~$ Y\sub V $ we can find a finite set in~$ V\sm Y $ separating~$ Y $ from~$ \omega $: for instance, the set~$ \bigcup\{T_v\sm Y\mid v\in Y\} $.
	
	Pick a sequence of finite sets~$ T_n\sub V $ inductively by setting~$ T_0\coloneqq T $ and picking as~$ T_n $ a set of minimal size with the property that~$ T_{n-1}\cap T_n=\emptyset $ and that~$ T_n $ separates~$ T_{n-1} $ from~$ \omega $; these sets exist by the above observation. Let~$ C_n $ be the component of~$ G-T_n $ that contains~$ \omega $. Clearly~$ T_n\sub C_{n-1} $.
	
	We claim that~$ C\coloneqq \bigcap_{n\in\N}C_n=\emptyset $. To see this, consider any $ v\in C $ and a shortest $ v $--$ X $ path in~$ G $. This path must pass through~$ T_n $ for every $ n\in\N $, which is impossible since the separators~$ T_n $ are pairwise disjoint. Therefore~$ C $ must be empty.
	
	By the minimality of each~$ T_n $, the sets~$ T_n $ are of non-decreasing size, and furthermore Menger's Theorem yields a family of~$ \abs{T_n} $ many disjoint paths between~$ T_n $ and~$ T_{n+1} $ for each $ n\in\N $, as well as~$ \abs{T_0} $ many disjoint paths between~$ X $ and~$ T_0 $. By concatenating these paths we obtain a family of~$ \abs{T_0}=\abs{T} $ many rays starting in~$ X $. To finish the proof we just need to show that these rays belong to~$ \omega $. To see this, let~$ \omega' $ be another end of~$ G $, and~$ T' $ a finite set separating~$ \omega $ and~$ \omega' $. Since~$ C=\emptyset $ we have~$ C_n\cap T'=\emptyset $ for sufficiently large $ n $, which shows that the rays constructed do not belong to~$ \omega' $ and hence concludes the proof.
\end{proof}

An immediate consequence of~\cref{lem:sequence} is that for an undominated end~$ \omega $ every finite set~$ X\sub V $ can be separated from~$ \omega $ by at most~$ \deg(\omega) $ many vertices. In fact we can state a slightly more general corollary:

\begin{COR}\label{cor:deg+dom}
	Let~$ \omega $ be a finitely dominated end of~$ G $ and~$ X\sub V $ a finite set. Then~$ X $ can be separated from~$ \omega $ by some~$ T\sub V $ with~$ \abs{T}\le\deg(\omega)+\dom(\omega) $.
\end{COR}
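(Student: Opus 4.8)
The plan is to reduce the statement to the undominated case already treated in~\cref{lem:sequence} by deleting the dominating vertices. Let~$ D $ be the set of vertices of~$ G $ that dominate~$ \omega $; since~$ \omega $ is finitely dominated,~$ D $ is finite with~$ \abs{D}=\dom(\omega) $. I would then work in the subgraph~$ G'\coloneqq G-D $ and argue that~$ \omega $ induces there a single undominated end whose degree has not grown.

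The key preliminary facts about~$ G' $ are as follows. Every ray of~$ \omega $ has a tail in~$ G' $, and any two such tails are equivalent in~$ G' $: two rays of~$ \omega $ are joined by infinitely many disjoint paths in~$ G $, infinitely many of which avoid the finite set~$ D $, so their tails are joined by infinitely many disjoint paths in~$ G' $. Hence the rays of~$ \omega $ determine a unique end~$ \omega' $ of~$ G' $. This~$ \omega' $ is undominated, since a vertex~$ v\in V\sm D $ sending infinitely many disjoint paths to an~$ \omega' $-ray in~$ G' $ would send infinitely many disjoint paths to a ray of~$ \omega $ in~$ G $, making~$ v $ dominate~$ \omega $ and thus lie in~$ D $, a contradiction. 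Finally~$ \deg_{G'}(\omega')\le\deg(\omega) $, because any family of disjoint~$ \omega' $-rays in~$ G' $ is also a family of disjoint~$ \omega $-rays in~$ G $.

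With these facts in hand I would apply the consequence of~\cref{lem:sequence} for undominated ends to the finite set~$ X\sm D $ and the end~$ \omega' $ in~$ G' $: it yields a set~$ T'\sub V\sm D $ separating~$ X\sm D $ from~$ \omega' $ in~$ G' $ with~$ \abs{T'}\le\deg_{G'}(\omega')\le\deg(\omega) $. Setting~$ T\coloneqq T'\cup D $ gives~$ \abs{T}\le\deg(\omega)+\dom(\omega) $ as required, and~$ T $ separates~$ X $ from~$ \omega $ in~$ G $: any~$ \omega $-ray meeting~$ X $ either meets~$ D\sub T $, or avoids~$ D $ and hence is a ray of~$ \omega' $ in~$ G' $ meeting~$ X\sm D $, so that it meets~$ T'\sub T $. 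The main obstacle, and the only point needing genuine care, is the passage to~$ G' $ in the second paragraph, namely verifying that deleting the finitely many dominating vertices leaves a single undominated end whose degree has not increased; once this is settled the corollary follows from~\cref{lem:sequence} by routine bookkeeping.
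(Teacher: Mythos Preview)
Your proof is correct and follows essentially the same approach as the paper: delete the dominating set~$D$, apply \cref{lem:sequence} in~$G'=G-D$ to the set~$X\sm D$, and take~$T=T'\cup D$. You are in fact more careful than the paper, which applies \cref{lem:sequence} to ``$\omega$ in~$G'$'' without explicitly arguing, as you do, that the tails of~$\omega$-rays determine a single undominated end~$\omega'$ of~$G'$ with~$\deg_{G'}(\omega')\le\deg(\omega)$.
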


\begin{proof}
	Let~$ D $ be the set of vertices dominating~$ \omega $ and consider the graph~$ G'\coloneqq G-D $ and the set~$ X'\coloneqq X\sm D $. By~\cref{lem:sequence} there is a set~$ T'\sub V(G') $ of size at most~$ \deg(\omega) $ separating~$ X' $ from~$ \omega $ in~$ G' $. Set~$ T\coloneqq T'\cup D $. Then~$ T $ separates~$ X $ from~$ \omega $ in~$ G $ and has size~$ \abs{T}=\abs{T'}+\abs{D}\le\deg(\omega)+\dom(\omega) $. 
\end{proof}

The second lemma we shall need for our proof of~\cref{thm:vSk} roughly states that for an end of high degree we can find a large family of disjoint rays of that end whose set of starting vertices is highly connected in~$ G $, even after removing the tails of these rays:

\begin{LEM}\label{lem:paths}
	Let~$ \omega $ be an end of~$ G $ and $ {k\le\deg(\omega)+\dom(\omega)} $. Then there are a set~$ X\sub V $ of~$ k $ vertices and a set~$ \cR $ of disjoint~$ \omega $-rays with the following properties: every vertex in~$ X $ is either the start-vertex of a ray in~$ \cR $, or dominates~$ \omega $ and does not lie on any~$ R\in\cR $, and furthermore for any two sets~$ A,B\sub X $ there are~$ \min(\abs{A},\abs{B}) $ many disjoint~$ A $--$ B $-paths in~$ G $ whose internal vertices meet no ray in~$ \cR $ and no vertex of~$ X $.
\end{LEM}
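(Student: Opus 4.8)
The plan is to build the required web around $\omega$ explicitly, treating $\deg(\omega)$ and $\dom(\omega)$ as two separate sources of the connectivity and then verifying the linkage property by a Menger argument. First I would fix a splitting $k=k_1+k_2$ with $k_1\le\deg(\omega)$ and $k_2\le\dom(\omega)$, which is possible precisely because $k\le\deg(\omega)+\dom(\omega)$; the $k_1$ ray-starts will form the ``degree part'' of $X$ and the $k_2$ dominating vertices its ``domination part''. The domination part is cheap: any two vertices dominating $\omega$ are joined by infinitely many disjoint paths, obtained by sending each into a common ray $R\in\omega$ along infinitely many disjoint paths and concatenating these two fans along $R$. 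Hence a set of $k_2$ dominating vertices is automatically externally linked, and, what is more, each such vertex reaches the ray bundle along infinitely many disjoint paths avoiding any prescribed finite set. The real work is thus concentrated in the degree part, where I may delete the finitely many dominating vertices, pass to the undominated end that remains, and control the loss by \cref{cor:deg+dom}.

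For the degree part I would produce the rays not as an arbitrary maximal family but along a nested sequence of frontiers, exactly as in the proof of \cref{lem:sequence}: choose pairwise disjoint finite separators $T_0,T_1,\dots$, each separating $T_{n-1}$ from $\omega$ and of the common size $c=\deg(\omega)$ (attained once the initial set is large enough), together with the complete Menger linkage of $c$ disjoint paths between consecutive frontiers. Concatenating these linkages yields $c\ge k_1$ disjoint $\omega$-rays $\cR=\{R_1,\dots,R_{k_1}\}$. The crucial extra ingredient is to \emph{stagger} their starts: I let $R_i$ begin only on the frontier $T_{n_i}$ for strictly increasing depths $n_1<n_2<\cdots<n_{k_1}$, and take $x_i$ to be the vertex where its strand meets $T_{n_i}$. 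The initial segment of each strand, shallower than $T_{n_i}$, then carries no vertex of any ray, so these freed segments, glued together by the frontier structure, form a connected region $W$ lying off $\cR$ into which every $x_i$ descends along its own free strand. It is the completeness of the Menger linkages between consecutive frontiers that makes $W$ richly connected, rather than a mere bundle of parallel strands.

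Finally I would verify the linkage property. Given $A,B\sub X$ with $t=\min(\abs A,\abs B)$, I route the terminals of $A$ and of $B$ into $W$ along their free strands — dominating vertices of $X$ entering the bundle along their infinitely many disjoint paths — and then realise $t$ disjoint connections inside $W$ from the Menger linkages, reading off $t$ disjoint $A$--$B$ paths whose interiors avoid $\cR$ and $X$. To see that this always succeeds I would argue by contradiction: if some $S$ with $\abs S<t$, disjoint from $\bigcup\cR$ and from $X$, separated $A$ from $B$ off the rays, then combining $S$ with the strand structure would exhibit fewer than $\deg(\omega)$ disjoint $\omega$-rays issuing from $A$, contradicting \cref{lem:sequence} (and its refinement \cref{cor:deg+dom} when $\omega$ is dominated).

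I expect the main obstacle to be exactly this last point: securing the \emph{full} multiplicity $t$ of transversal paths, rather than mere pairwise connectivity, in the tight regime $\deg(\omega)+\dom(\omega)=k$, where every ray of the bundle is used up and no strand can be spared for rerouting. (A naive choice, such as taking the rows of $P_k\times\N$ as the rays with aligned starts, already fails the linkage for $t\ge 1$, since removing the rays leaves only a path through $X$.) Overcoming this is precisely what forces the staggering of the starts and the use of the complete linkages between consecutive frontiers, so that the freed initial strand-segments of $W$ supply exactly the $t$ disjoint off-ray transversals demanded by the statement.
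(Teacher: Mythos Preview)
Your approach is genuinely different from the paper's, and the frontier-and-staggering idea is appealing, but the linkage verification has a real gap. You assert that ``the completeness of the Menger linkages between consecutive frontiers makes $W$ richly connected, rather than a mere bundle of parallel strands'', yet the Menger linkage between $T_n$ and $T_{n+1}$ \emph{is} precisely a bundle of $|T_n|$ disjoint paths --- there is no transversal structure built into it. If the strands happen never to permute across frontiers (as in $P_k\times\N$, where the linkages are just the horizontal rows), then your freed region $W$ is a disjoint union of initial strand-segments with no cross-connections coming from the frontier construction itself. In $P_k\times\N$ the linkage property happens to hold, but only because the vertical grid edges supply the missing transversals --- and those edges are external to your construction. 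Your closing Menger argument (``fewer than $\deg(\omega)$ rays issuing from $A$, contradicting \cref{lem:sequence}'') is also not well-formed: $A$ may have far fewer than $\deg(\omega)$ vertices, so there is nothing to contradict.

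The paper's proof bypasses the frontier machinery entirely. It exploits directly the one fact you never invoke: that any two elements of $D\cup\cR$ are joined in $G$ by a path, and indeed one can choose such paths for all $\binom{k}{2}$ pairs simultaneously, pairwise internally disjoint (this is where the rays being equivalent and the vertices dominating $\omega$ are actually used). Having fixed this finite path system $\cP$, one then takes tails $\cR'$ of the rays avoiding $\cP$ and lets $X$ be $D$ together with the new start-vertices. The linkage is now verified by a short counting argument: given disjoint $A,B\sub X$ of size $n$ and a candidate separator $T$ of size ${<}\,n$, if $x$ vertices of $A$ and $y$ of $B$ have their rays unhit by $T$, then $x+y\ge n+1$, hence $xy\ge x(n+1-x)\ge n>|T|$, and the $xy$ paths of $\cP_{A,B}$ between these survivors are disjoint outside their ray segments, so one of them avoids $T$. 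This is both shorter and avoids the connectivity issue your staggered construction runs into.
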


\begin{proof}
	Pick a set~$ D $ of vertices dominating~$ \omega $ and a set~$ \cR $ of disjoint~$ \omega $-rays not meeting~$ D $ such that~$ \abs{D}+\abs{\cR}=k $; we shall find suitable tails of the rays in~$ \cR $ such that their starting vertices together with~$ D $ are the desired set~$ X $.
	
	Using the fact that the vertices in~$ D $ dominate~$ \omega $ and that the rays in~$ \cR $ belong to~$ \omega $, we can pick for each pair~$ x_1,x_2 $ of elements of~$ D\cup\cR $ an~$ x_1 $--$ x_2 $-path in~$ G $ in such a way that these paths are pairwise disjoint with the exception of possibly having a common end-vertex in~$ D $. Let~$ \cP $ be the set of these paths. Now for each ray in~$ \cR $ pick a tail of that ray which avoids all the paths in~$ \cP $. Let~$ \cR' $ be the set of these tails and~$ X $ the union of their starting vertices and~$ D $. We claim that~$ X $ and~$ \cR' $ are as desired.
	
	To see this, let us show that for any sets~$ A,B\sub X $ we can find~$ \min(\abs{A},\abs{B}) $ many disjoint~$ A $--$ B $-paths in~$ G $ whose internal vertices avoid~$ D $ as well as~$ V(R') $ for every~$ R'\in\cR' $. Clearly it suffices to show this for disjoint sets~$ A,B $ of equal size. So let~$ A,B\sub X $ be two disjoint sets with $ n\coloneqq \abs{A}=\abs{B} $ and let~$ \cR_{A,B} $ be the set of all rays in~$ \cR $ that contain a vertex from~$ A $ or~$ B $. For each pair~$ (a,b)\in A\times B $ there is a unique path~$ P\in\cP $ such that each of its end-vertices either is $ a $ or $ b~$ (if $ a\in D $ or $ b\in D $) or lies on a ray in~$ \cR_{A,B} $ which contains $ a $ or~$ b $; let~$ P_{a,b} $ be the $ a $--$ b $-path obtained from~$ P $ by extending it, for each of its end-vertices that is not either $ a $ or $ b $, along the corresponding ray in~$ \cR $ up to $ a $ or $ b $. Let~$ \cP_{A,B} $ be the set of all these paths~$ P_{a,b} $. Note that the internal vertices of each path~$ P_{a,b}\in\cP_{A,B} $ meet none of the rays in~$ \cR' $ or vertices in~$ X $.
	
	We claim that~$ A $ and~$ B $ cannot be separated by fewer than $ n=\abs{A} $ vertices in
	\[G'\coloneqq \bigcup_{P_{a,b}\in\cP_{A,B}}P_{a,b}\,;\]
	the claim will then follow from Menger's Theorem. So suppose that some set~$ T\sub V(G') $ of size less than $ n $ is given. Let~$ x $ and $ y $ be the number of vertices in~$ A $ and~$ B $, respectively, whose ray in~$ \cR_{A,B} $ does not meet~$ T $. There are~$ xy $ many paths in~$ \cP_{A,B} $ between these vertices in~$ A $ and~$ B $. Since these paths are disjoint outside their corresponding ray segments, each vertex of~$ T $ can lie on at most one of them. Thus if~$ xy\ge n $ there must be a~$ T $-avoiding path in~$ \cP_{A,B} $ whose end-vertices' rays in~$ \cR_{A,B} $ also do not meet~$ T $.
	
	Since~$ x+y\ge n+1 $ we have~$ xy\ge x(n+1-x) $. The right-hand side of this inequality, as a function of~$ x $ with domain $ [n-1] $, is minimized by taking~$ x=1 $, wherefore it evaluates to~$ n $. Thus~$ xy\ge n $, which shows that~$ T $ does not separate~$ A $ and~$ B $ in~$ G' $.
	
	We can thus apply Menger's Theorem to obtain $ n $ disjoint~$ A $--$ B $-paths in~$ G' $, which are the desired disjoint paths in~$ G $ whose internal vertices avoid the rays in~$ \cR' $ and vertices in~$ X $: the only vertices that are contained both in~$ V(G') $ as well as in either~$ X $ or a ray from~$ \cR' $ are vertices from~$ A $ or~$ B $, which cannot be internal vertices of the $ n=\abs{A}=\abs{B} $ disjoint~$ A $--$ B $-paths.
\end{proof}

We are now ready to prove~\cref{thm:vSk}:

\EndsChar*

\begin{proof}
	We will show~$ \tn{(i)}\Rightarrow\tn{(ii)}\Rightarrow\tn{(iv)}\Rightarrow\tn{(iii)}\Rightarrow\tn{(i)} $.
	
	To see that~$ \tn{(i)}\Rightarrow\tn{(ii)} $, let us suppose that $ {\deg(\omega)+\dom(\omega)<k} $ and show that~$ \tau\cap\vS_k $ is not closed. Let~$ D $ be the set of dominating vertices of~$ \omega $. Since~$ \abs{D}=\dom(\omega)<k $ the separation~$ (V,D) $ lies in~$ \vS_k $. By definition of~$ \tau $ we have~$ (D,V)\in\tau $ and~$ (V,D)\notin\tau $. Thus it suffices to show that~$ (V,D) $ lies in the closure of~$ \tau\cap\vS_k $ in~$ \vS_k $. This will be the case if for every finite set~$ X\sub V $ there is a separation~$ (A,B)\in\tau\cap\vS_k $ with~$ (A,B)\proj X=(V,D)\proj X $.
	
	So let~$ X $ be a finite subset of~$ V $. By~\cref{cor:deg+dom} some set~$ T $ of at most~$ {\deg(\omega)+\dom(\omega)} $ vertices separates~$ X $ from~$ \omega $. Let~$ C $ be the component of~$ G-T $ containing~$ \omega $. We define the separation~$ (A,B) $ by setting~$ A\coloneqq V\sm C $ and~$ B\coloneqq T\cup C $. Then~$ (A,B) $ is a separation of~$ G $ with
	\[ \abs{A\cap B}=\abs{T}\le\deg(\omega)+\dom(\omega)<k\,, \]
	giving~$ (A,B)\in\vS_k $. In fact~$ (A,B) $ lies in~$ \tau\cap\vS_k $ since~$ \omega $ lives in~$ B $. Furthermore we have~$ X\sub A $ and~$ D\sub B $ since no vertex dominating~$ \omega $ can be separated from~$ \omega $ by~$ T $. Therefore~$ (A,B)\proj X=(X,X\cap D)=(V,D)\proj X $, showing that~$ (V,D) $ lies in the closure of~$ \tau\cap\vS_k $ in~$ \vS_k $.
	
	Let us now show that~$ \tn{(ii)}\Rightarrow\tn{(iv)} $. So let us assume that $ {\deg(\omega)+\dom(\omega)\ge k} $. Then by~\cref{lem:paths} we find a set~$ X\sub V $ of size~$ k $ and a family~$ \cR $ of~$ \omega $-rays such that every vertex of~$ X $ either dominates~$ \omega $ or is the start-vertex of a ray in~$ \cR $, and such that for any~$ A,B\sub X $ we can find~$ \min(\abs{A},\abs{B}) $ many disjoint~$ A $--$ B $-paths in~$ G $ whose internal vertices meet neither~$ X $ nor any ray in~$ \cR $.
	
	We claim that~$ X $ is the desired relative decider for~$ \tau\cap\vS_k $. To see this let~$ (A,B) $ be any separation in~$ \tau\cap\vS_k $; we need to show that~$ \abs{A\cap X}<\abs{B\cap X} $. Let us write~$ X_{A\sm B}\coloneqq (A\sm B)\cap X $ and~$ X_{B\sm A}\coloneqq (B\sm A)\cap X $ as well as~$ X_{A\cap B}\coloneqq (A\cap B)\cap X $. It then suffices to prove~$ \abs{X_{A\sm B}}<\abs{X_{B\sm A}} $.
	
	So suppose to the contrary that~$ \abs{X_{A\sm B}}\ge\abs{X_{B\sm A}} $. Note first that no vertex in~$ X_{A\sm B} $ dominates~$ \omega $ as witnessed by the finite-order separation $ {(A,B)\in\tau\sub S_k} $. Therefore, for every vertex in~$ X_{A\sm B} $, we have a ray in~$ \cR $ starting at that vertex. Each of those disjoint rays must pass through the separator~$ A\cap B $, and none of them hits~$ X_{A\cap B} $. Furthermore by~\cref{lem:paths} there are~$ \abs{X_{B\sm A}} $ many disjoint~$ X_{A\sm B} $--$ X_{B\sm A} $-paths whose internal vertices avoid~$ \cR $ and~$ X $. These paths, too, must pass the separator~$ A\cap B $ without meeting~$ X_{A\cap B} $ or any of the rays above. Thus we have
	\[\abs{A\cap B}\ge\abs{X_{A\cap B}}+\abs{X_{A\sm B}}+\abs{X_{B\sm A}}=\abs{X}=k,\]
	a contradiction since~$ (A,B)\in S_k $ and hence~$ \abs{A\cap B}<k $. Therefore we must have~$ \abs{X_{A\sm B}}<\abs{X_{B\sm A}} $, which immediately implies~$ \abs{A\cap X}<\abs{B\cap X} $.
	
	Finally, let us show that~$ \tn{(iii)}\Rightarrow\tn{(i)} $. So let~$ X\sub V $ be a finite relative decider for~$ \tau\cap\vS_k $. We need to show that no~$ (A,B)\in S_k\sm\tau $ lies in the closure of~$ \tau\cap\vS_k $. For this let~$ (A,B)\in S_k\sm\tau $ be given; then~$ X $ witnesses that~$ (A,B) $ does not lie in the closure of~$ \tau $. To see this let any~$ (C,D)\in\tau $ be given. Since~$ X $ is a relative decider for~$ \tau $ we have~$ \abs{C\cap X}<\abs{D\cap X} $, and since~$ (A,B)\notin\tau $ we have~$ \abs{A\cap X}\ge\abs{B\cap X} $. Therefore~$ (A,B) $ and~$ (C,D) $ do not agree on the finite set~$ X $, which thus witnesses that~$ (A,B) $ does not lie in the closure of~$ \tau\cap\vS_k $ in~$ S_k $.
\end{proof}

Note that in our proof above that~$ \tn{(iii)} $ implies~$ \tn{(i)} $ we did not make use of the assumption that the tangle~$ \tau $ is an end tangle: indeed every orientation of~$ S_k $ that has a finite relative decider is closed in~$ \vS_k $.

For an end tangle~$ \tau $ that is closed in~$ \vS $ we can say slightly more about its restrictions' relative deciders: for every~$ k\in\N $ the restriction~$ \tau\cap\vS_k $ has a relative decider of size exactly~$ k $ which is a~$ (<k) $-inseparable set. Finding these~$ (<k) $-inseparable decider sets is straightforward: such an end tangle~$ \tau $ is decided absolutely by an $ {\aleph_0} $-block~$ K $ by~\cref{thm:blocks}, and every subset~$ X\sub K $ of size~$ k $ is a~$ (<k) $-inseparable relative decider for~$ \tau\cap\vS_k $. (Every such set is an absolute decider, in fact.) However, having a~$ (<k) $-inseparable decider for~$ \tau\cap\vS_k $ for all~$ k\in\N $ is not a characterizing property for the closed end tangles of~$ G $:

\begin{EX}\label{ex:k-inseparable}
	For $ n\in\N $ let~$ K^n $ be the complete graph on $ n $ vertices. Let~$ G $ be the graph obtained from a ray~$ R=v_1v_2\dots $ by replacing each vertex $ v_n $ with the complete graph~$ K^n $, making each vertex from the~$ K^n $ replacing~$ v_n $ adjacent to all vertices from the~$ K^{n+1} $ replacing~$ v_{n+1} $. Then~$ G $ has a unique end~$ \omega $; let~$ \tau $ be the end tangle induced by~$ \omega $. Since~$ \omega $ is undominated~$ \tau $ is not closed in~$ \vS=\vS(G) $ by~\cref{thm:categories}. However, for every~$ k\in\N $, the tangle~$ \tau\cap\vS_k $ has a~$ (<k) $-inseparable absolute decider of size~$ k $: the clique~$ K^k $ which replaced the vertex $ v_k $ of~$ R $ is such a~$ (<k) $-inseparable decider.
\end{EX}

\vspace{2cm}
\begin{bibdiv}
	\addcontentsline{toc}{section}{References}
	\begin{biblist}
		
		\bib{DiestelBook}{book}{
			author={Diestel, R.},
			title={Graph theory \emph{(5th edition)}},
			publisher={Springer-Verlag},
			date={2017},
			note={\hfill\break Electronic edition available at
				\texttt{http://diestel-graph-theory.com/}},
		}
		
		\bib{EndsAndTangles}{inproceedings}{
			author={Diestel, Reinhard},
			title={Ends and tangles},
			organization={Springer},
			date={2017},
			booktitle={Abhandlungen aus dem {M}athematischen {S}eminar der
				{U}niversit{\"a}t {H}amburg},
			volume={87},
			pages={223\ndash 244},
		}
		
		\bib{Profiles}{article}{
			author={Diestel, Reinhard},
			author={Hundertmark, Fabian},
			author={Lemanczyk, Sahar},
			title={Profiles of separations: in graphs, matroids, and beyond},
			date={2019},
			journal={Combinatorica},
			volume={39},
			number={1},
			pages={37\ndash 75},
		}
		
		\bib{ProfiniteASS}{article}{
			author={Diestel, Reinhard},
			author={Kneip, Jakob},
			title={Profinite separation systems},
			date={2019},
			journal={Order},
			pages={1\ndash 27},
		}
		
		\bib{ChristiansMasterarbeit}{article}{
			author={Elbracht, Christian},
			title={Tangles determined by majority vote},
			organization={Master’s thesis, Universit{\"a}t Hamburg},
			date={2017},
		}
		
		\bib{WeightedDeciders}{article}{
			author={Elbracht, Christian},
			author={Kneip, Jakob},
			author={Teegen, Maximilian},
			title={Tangles are decided by weighted vertex sets},
			date={2018},
			journal={arXiv preprint arXiv:1811.06821},
		}
		
		\bib{RST}{article}{
			author={Robertson, Neil},
			author={Seymour, Paul~D},
			author={Thomas, Robin},
			title={Excluding subdivisions of infinite cliques},
			date={1992},
			journal={Transactions of the American Mathematical Society},
			pages={211\ndash 223},
		}
		
		\bib{RSTminors}{book}{
			author={Robertson, Neil},
			author={Seymour, Paul~D},
			author={Thomas, Robin},
			title={Excluding infinite clique minors},
			publisher={American Mathematical Soc.},
			date={1995},
			volume={566},
		}
		
	\end{biblist}
\end{bibdiv}

\vspace{1cm}
\noindent
\begin{minipage}{\linewidth}
	\raggedright\small
	\textbf{Jakob Kneip},
	Universität Hamburg,
	Hamburg, Germany \\
	\texttt{jakob.kneip@studium.uni-hamburg.de}
\end{minipage}

\end{document}